\newtheorem{theorem}{Theorem}[section]
\newtheorem*{theorem_no_number}{Theorem}
\newtheorem{corollary}[theorem]{Corollary}
\newtheorem{lemma}[theorem]{Lemma}
\newtheorem{proposition}[theorem]{Proposition}
\theoremstyle{definition}
\newtheorem{definition}[theorem]{Definition}
\theoremstyle{remark}
\numberwithin{equation}{section}
\newcommand{\bbC}{\mathbb{C}}
\newcommand{\bbN}{\mathbb{N}}
\newcommand{\calL}{\mathcal{L}}
\newcommand{\re}{\operatorname{Re}}
\newcommand{\im}{\operatorname{Im}}
\begin{document}

\title[Weak Decay Rates and Uniform Stability]{On Weak Decay Rates and Uniform Stability of Bounded Linear Operators}

\author{Jochen Gl\"uck}
\address{%
Institute of Applied Analysis\\
Ulm University\\
89069 Ulm\\
Germany}

\email{jochen.glueck@uni-ulm.de}

\thanks{During his work on this article the author was supported by a scholarship of the ``Landesgraduierten-F\"orderung Baden-W\"urttemberg''.}

\subjclass{Primary 47A10; Secondary 47D03}

\keywords{Weak stability, uniform stability, spectral radius, decay rates, domination, principal ideal}

\date{February 11, 2015}


\begin{abstract}
	We consider a bounded linear operator $T$ on a complex Banach space $X$ and show that its spectral radius $r(T)$ satisfies $r(T) < 1$ if all sequences $(\langle x',T^nx\rangle)_{n \in \mathbb{N}_0}$ ($x \in X$, $x' \in X'$) are, up to a certain rearrangement, contained in a principal ideal of the space $c_0$ of sequences which converge to $0$. From this result we obtain generalizations of theorems of G. Weiss and J. van Neerven. We also prove a related result on $C_0$-semigroups.
\end{abstract}

\maketitle

\section{Introduction}

Let $T$ be a bounded linear operator on a complex Banach space $X$ and suppose that $T$ is weakly stable, which means that the powers $T^n$ converge to zero as $n \to \infty$ with respect to the weak operator topology. There are plenty of results in the literature stating that $T$ is in fact uniformly stable, meaning that $T^n$ converges to zero with respect to the operator norm topology, if the weak convergence satisfies certain decay properties. For instance, G.~Weiss showed in \cite[Theorem~1]{Weiss1989} that the spectral radius $r(T)$ satisfies $r(T) < 1$ provided that there is a $p \in [1,\infty)$ such that the sequence $(\langle x', T^nx\rangle)_{n \in \bbN_0}$ is contained in $l^p := l^p(\bbN_0; \bbC)$ for each $x \in X$ and each functional $x'$ in the dual space $X'$. In this note we will show, among other results, that the conclusion of Weiss' theorem remains true if $p$ is allowed to depend on $x$ and $x'$, i.e.~we prove the following result:

\begin{theorem_no_number}
	Let $T$ be a bounded linear operator on a complex Banach space $X$ and suppose that for all $x \in X$, $x' \in X'$ there exists $p \in [1,\infty)$ such that $(\langle x', T^nx \rangle)_{n \in \bbN_0} \in l^p$. Then $r(T) < 1$.
\end{theorem_no_number}

This theorem (in fact, a slight generalization of it) is shown in Corollary~\ref{cor_weak_decay_rates_in_union_of_l_p_spaces_implies_spec_rad_smaller_than_one} below. In \cite[Theorem 0.1]{Neerven1995}, van Neerven gave the following generalization of Weiss' result: Let $\varphi: [0,\infty) \to [0,\infty)$ be a non-decreasing function which is strictly positive on $(0,\infty)$ and suppose that
\begin{align}
	\sum_{n=0}^\infty \varphi(|\langle x', T^n x \rangle|) < \infty \label{form_error_series_introduction}
\end{align}
for all $x \in X$, $x' \in X'$ satisfying $||x||\le 1$, $||x'|| \le 1$. Then $r(T) < 1$. In the current note, we show that the same conclusion still holds true if one allows the function $\varphi$ to vary within a countable set; more precisely, we prove the following:

\begin{theorem_no_number}
	Let $T$ be a bounded linear operator on a complex Banach space $X$. Let $\Phi$ be an at most countable set of non-decreasing functions $\varphi: [0,\infty) \to [0,\infty)$ which are strictly positive on $(0,\infty)$ and suppose that for each $x \in X$, $x' \in X'$ satisfying $||x||\le 1$, $||x'|| \le 1$ there is a $\varphi \in \Phi$ such that (\ref{form_error_series_introduction}) holds true. Then $r(T) < 1$.
\end{theorem_no_number}

A slightly more general version of this theorem is proved in Theorem~\ref{thm_one_of_countably_many_series_smaller_then_infty_implies_spec_rad_smaller_than_one} below. The reader will easily see that the first theorem stated above follows from the second one. Besides that, we also prove a $C_0$-semigroup version of the first theorem in Section~\ref{section_stability_of_c_0_semigroups}.

Let us comment briefly on the history of the subject: the development of results like those quoted above was motivated by a well-known theorem of Datko and Pazy, who proved in \cite{Datko1970}, \cite{Pazy1972} that an integrability condition on the orbits of a $C_0$-semigroup ensures that its growth bound is strictly negative; see also \cite[Theorem~V.1.8]{Engel2000}. It was proved by G.~Weiss that a weak version of the same result holds true on Hilbert spaces \cite[Theorem~1.1]{Weiss1988} (but, as noted by van Neerven in \cite[Example~4.1]{Neerven1995}, a weak version of the Datko-Pazy Theorem cannot hold true on arbitrary Banach spaces). 
In \cite{Weiss1989}, Weiss proved a single operator analogue of the Datko-Pazy Theorem, and in \cite{Neerven1995}, van Neerven generalized the known results on single operators as well as on semigroups by considering certain Banach function spaces. For more details on the development of the subject up to then and for additional references we also refer to the introduction of \cite{Neerven1995}. Further information on the size of (weak) operator orbits can also be found in the survey article \cite{Muller2001}. Since then, a considerable number of results on this topic has appeared, many of them devoted to the analysis of more general evolution semigroups, as e.g.~in \cite{Preda2006}. We also want to mention the paper \cite{Nikolski2002}, where the resolvent of an operator $T$ is assumed to belong to some spaces of holomorphic functions, which implies $r(T) < 1$ under certain conditions. A very recent contribution to the field is \cite{Bucse2014} where uniform stability of a $C_0$-semigroup is derived from certain discrete, weak summability conditions. Some rather general results on the existence of large weak orbits of $C_0$-semigroups were recently published in \cite{Storozhuk2010} and \cite{Muller2013} \par 

While van Neerven used Banach function spaces and in particular Orlicz spaces to prove his result quoted at the beginning, we use a different approach: In Section~\ref{section_principal_ideals_and_convergence_rates}, we show that an operator $T$ satisfies $r(T) < 1$ if all sequences $(\langle x',T^nx \rangle)_{n \in \bbN_0}$ ($x \in X$, $x' \in X'$) are in some appropriate manner controlled by a sequence $f$ which converges to $0$. In Proposition~\ref{prop_countable_union_of_principal_ideals_in_c_0} we show that the situation does not change if $f$ is allowed to vary within a countable set of sequences. In Section~\ref{section_summability_of_errors} we use these results to deduce the two theorems stated above. Finally, in Section~\ref{section_stability_of_c_0_semigroups}, we adapt one of our results to the setting of $C_0$-semigroups. All our main results will be proved on complex Banach spaces, but the reader will find it easy to adapt our results to the setting of real Banach spaces by using complexifications.

\bigskip \par

\section{Preliminaries}

Let us fix some notation that will be used throughout the paper: If $X$ is a real or complex Banach space, then we denote the space of all bounded linear operators on $X$ by $\calL(X)$. Let $X$ be a complex Banach space. If $A: X \supset D(A) \to X$ is a linear operator, then $\sigma(A)$ is the \emph{spectrum} of $A$ and $s(A) := \sup\{\re \lambda: \lambda \in \sigma(A)\}$ the \emph{spectral bound} of $A$. If $\lambda \in \bbC \setminus \sigma(A)$, then $R(\lambda,A) := (\lambda-A)^{-1}$ denotes the \emph{resolvent} of $A$ at $\lambda$. If $T \in \calL(X)$, we use the symbol $r(T)$ for the \emph{spectral radius} of $T$. The operator $T$ is said to be \emph{power-bounded} if $\sup_{n \in \bbN_0} ||T^n|| < \infty$, and in this case we automatically have $r(T) \le 1$. \par 
Let $X$ be a real or complex Banach space. The \emph{dual space} of $X$ (i.e.~the space of all bounded linear functionals on $X$) will be denoted by $X'$. A vector subspace $E \subset X'$ is called \emph{almost norming} if there is a positive constant $C$ such that
\begin{align*}
	||x|| \le \sup_{x' \in E, \, ||x'|| \le 1} C \cdot |\langle x', x \rangle| \text{.}
\end{align*}
for all $x \in X$. \par 
The $C_0$-semigroup on $X$ with generator $A$ will be denoted by $(e^{tA})_{t \ge 0}$. We use the symbol $\omega_0(A)$ to denote the \emph{growth bound} (or \emph{type}) of the $C_0$-semigroup $(e^{tA})_{t \ge 0}$. If $X$ is a complex Banach space, then we denote by $s_0(A)$ the \emph{abscissa of uniform boundedness} of the resolvent of $A$, i.e.~the infimum of all real number $s$ such that the halfplane $\{z \in \bbC: \re z \ge s\}$ is contained in the resolvent set $\bbC \setminus \sigma(A)$ of $A$ and such that the resolvent $R(\cdot,A)$ is uniformly bounded on $\{z \in \bbC: \re z \ge s\}$. \par 
Let us also introduce notation for a few sequence spaces: By $c_0 := c_0(\bbN_0;\bbC)$ we denote the space of all complex-valued sequences, indexed by $\bbN_0$, which converge to $0$. By $(c_0)_+$ we denote those sequences $f = (f_n)_{n \in \bbN_0} \in c_0$ which have only real, non-negative entries $f_n$. If $g = (g_n)_{n \in \bbN_0} \in c_0$, then we call the sequence $|g| := (|g_n|)_{n \in \bbN_0}$ the \emph{modulus} of $g$. If $f \in (c_0)_+$, then the vector subspace
\begin{align*}
	(c_0)_f = \{g \in c_0: \; \exists c \ge 0: \, |g| \le c f\} \subset c_0
\end{align*}
is called the \emph{principal ideal} in $c_0$ generated by $f$. For each $f = (f_n)_{n \in \bbN_0} \in (c_0)_+$, we denote by $f^* \in (c_0)_+$ the \emph{non-increasing rearrangement} of $f$, i.e.~the unique sequence which is non-increasing and has the same entries as $f$ (including their multiplicities). The $n$-th entry of $f^*$ will be denoted by $(f^*)_n =: f_n^* =: {f_n}^*$; the last of these notations is slightly imprecise, but it is more convenient in concrete computations. If $1 \le p < \infty$, then $l^p := l^p(\bbN_0;\bbC)$ denotes the space of all complex-valued $p$-summable sequences which are indexed by $\bbN_0$. \par 

On several occasions we will need the following proposition which is a version of the Uniform Boundedness Principle.

\begin{proposition} \label{prop_unif_boundedness_on_norming_dual_sets}
	Let $X$ be a real or complex Banach space and let $E \subset X'$ be a closed, almost norming vector subspace of $X'$. Let $A \subset X$ be such that $\{\langle x', x\rangle: x \in A \}$ is bounded for every $x' \in E$. Then $A$ is bounded in $X$.
	\begin{proof}
		The proof is essentially the same as for the Uniform Boundedness Principle; we therefore leave it to the reader.
	\end{proof}
\end{proposition}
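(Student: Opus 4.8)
The plan is to reduce the statement to the classical Uniform Boundedness Principle, but applied on the Banach space $E$ rather than on $X$ or $X'$. First I would note that, since $E$ is a \emph{closed} subspace of the Banach space $X'$, it is itself a Banach space. For each $x \in A$, consider the functional $\Psi_x \colon E \to \bbC$ defined by $\Psi_x(x') := \langle x', x\rangle$; this is linear and bounded, with $\|\Psi_x\|_{E'} = \sup_{x' \in E,\, \|x'\| \le 1} |\langle x', x\rangle|$. In other words, $x \mapsto \Psi_x$ is the composition of the canonical embedding $X \hookrightarrow X''$ with the restriction map $X'' \to E'$.

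Next I would observe that the hypothesis says precisely that the family $(\Psi_x)_{x \in A} \subset E'$ is pointwise bounded on $E$: for every fixed $x' \in E$, the set $\{\Psi_x(x') : x \in A\} = \{\langle x', x\rangle : x \in A\}$ is bounded. Since $E$ is complete, the classical Uniform Boundedness Principle applies and yields a constant $M \ge 0$ such that $\|\Psi_x\|_{E'} \le M$ for all $x \in A$, i.e. $\sup_{x' \in E,\, \|x'\| \le 1} |\langle x', x\rangle| \le M$ for every $x \in A$. Finally, I would invoke the almost norming property of $E$: there is a constant $C > 0$ with $\|x\| \le C \sup_{x' \in E,\, \|x'\| \le 1} |\langle x', x\rangle|$ for all $x \in X$, so that $\|x\| \le CM$ for each $x \in A$, which proves that $A$ is bounded.

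There is essentially no obstacle here — the argument is the standard proof of uniform boundedness, and the only two points worth naming are organisational: one must apply the principle on $E$, which is why completeness of $E$ (guaranteed by the closedness assumption) is needed, and one must recognise that the ``almost norming'' hypothesis is exactly the mechanism that transfers a bound on the $E'$-norms of the $\Psi_x$ back to a bound on the $X$-norms of the vectors $x$ themselves. This is precisely why the authors are content to leave the verification to the reader.
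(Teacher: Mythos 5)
Your argument is correct and is exactly the reduction the paper intends: apply the Uniform Boundedness Principle to the functionals $\Psi_x$ on the Banach space $E$ (completeness coming from closedness of $E$ in $X'$), then use the almost norming property to convert the uniform bound on $\|\Psi_x\|_{E'}$ into a bound on $\|x\|$. The only cosmetic remark is that the scalars should be $\bbR$ or $\bbC$ according to whether $X$ is real or complex, but this changes nothing.
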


\section{Weak decay rates governed by principal ideals in $c_0$} \label{section_principal_ideals_and_convergence_rates}

The following notion will be essential in formulating conditions which imply that the powers of an operator converge to $0$ with respect to the operator norm topology.

\begin{definition} \label{def_governing_sequences}
	Let $\emptyset \not= F \subset (c_0)_+$ and let $a = (a_n)_{n \in \bbN_0}$ be a sequence of complex numbers. We say that $F$ governs $a$ if $a \in c_0$ and if there exists a sequence $f \in F$ which satisfies $(|a_n|^*)_{n \in \bbN_0} \in (c_0)_f$. We say that a sequence $f \in (c_0)_+$ governs $a$ if the set $\{f\}$ governs $a$.
\end{definition}

The following theorem states that the powers of an operator $T$ converge to zero with respect to the operator norm topology if the non-increasing rearrangements of the sequences $(|\langle x', T^nx \rangle|)_{n \in \bbN_0}$ decay with the same rate for all $x \in X$, $x'\in X'$.

\begin{theorem} \label{thm_governing_sequence_for_weak_decay_rates_implies_spec_rad_smaller_than_one}
	Let $X$ be a complex Banach space, let $E \subset X'$ be a closed, almost norming vector subspace and let $T \in \calL(X)$. Let $f \in (c_0)_+$ and suppose that $f$ governs $(\langle x', T^nx \rangle)_{n \in \bbN_0}$ for all $x \in X$, $x' \in E$. Then $r(T) < 1$.
\end{theorem}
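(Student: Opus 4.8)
The plan is to show that power-boundedness of $T$ fails to be an obstacle and then to bootstrap the hypothesis into a genuine norm-decay statement. First I would observe that the governing hypothesis forces, for every fixed $x \in X$ and $x' \in E$, the sequence $(\langle x', T^n x\rangle)_{n}$ to be bounded (indeed to lie in $c_0$), so by Proposition~\ref{prop_unif_boundedness_on_norming_dual_sets} applied to $A = \{T^n x : n \in \bbN_0\}$ the orbit $\{T^n x\}$ is bounded for each $x$, and then by the ordinary Uniform Boundedness Principle $T$ is power-bounded; in particular $r(T) \le 1$. So it remains to rule out $r(T) = 1$, equivalently (since $T$ is power-bounded, so its spectral radius is attained on the unit circle) to rule out the existence of an approximate eigenvalue $\lambda$ with $|\lambda| = 1$.

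The core of the argument will be a diagonal/countability trick exploiting that a single $f \in (c_0)_+$ governs \emph{all} the weak orbits. Suppose for contradiction $r(T) = 1$. Since $T$ is power-bounded, there is $\lambda$ on the unit circle in the approximate point spectrum, so there exist unit vectors $x_k \in X$ with $\|(\lambda - T)x_k\| \to 0$. Pick functionals $x_k' \in E$ (using that $E$ is almost norming, after a harmless normalization) with $|\langle x_k', x_k\rangle|$ bounded below by a positive constant. Then for each fixed $N$, $\langle x_k', T^n x_k\rangle$ is close to $\lambda^n \langle x_k', x_k\rangle$ for all $n \le N$, uniformly in $k$ once $k$ is large; hence the rearranged sequences $(|\langle x_k', T^n x_k\rangle|^*)_n$ have their first $N$ entries bounded below by roughly $c > 0$. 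On the other hand, each of these sequences lies in $(c_0)_f$, so is dominated by $c_k f$ for some constant $c_k$; the obstacle is that $c_k$ may blow up with $k$. To control it, I would instead work with the \emph{single} vector and functional obtained as limits, or more carefully: since $\{f\}$ governs every such sequence, the first $N$ entries of $f^* = f$ (already non-increasing, WLOG) must satisfy $f_{N-1} \ge$ (something positive not depending on $N$) after dividing by the relevant constant — and here one pins the constant down by choosing $x_k, x_k'$ of norm $\le 1$ so that the governing constant can be taken uniform, e.g.\ by a further Uniform Boundedness argument on the family of sequences, or by passing to the closed subspace/weak-$*$ limit. This yields $\liminf_n f_n > 0$, contradicting $f \in c_0$.

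The main obstacle, as indicated, is the uniformity of the domination constant: Definition~\ref{def_governing_sequences} gives, for each pair $(x,x')$, \emph{some} constant $c = c(x,x')$, and the contradiction requires these to stay bounded along the approximate-eigenvector sequence. The clean way to handle this is to set up an auxiliary Banach (or complete metric) space structure: consider the map sending a pair in the unit balls to its rearranged weak orbit, landing in $(c_0)_f$ normed by $\|g\|_f = \inf\{c \ge 0 : |g| \le c f\}$, which makes $(c_0)_f$ a Banach space; show this map (or a suitable family of linear maps indexed by the coordinates) is pointwise bounded and then apply the Uniform Boundedness Principle once more to get a global constant $C$ with $(|\langle x', T^n x\rangle|^*)_n \le C \|x\|\,\|x'\| f$ for all $x, x'$. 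With this uniform bound in hand, the approximate-eigenvalue computation above goes through directly: choosing $x_k, x_k'$ in the unit ball with $|\langle x_k', T^n x_k\rangle| \to |\lambda|^n \cdot \delta = \delta$ for $n \le N$, we get $f_{N-1} \ge \delta/C$ for every $N$, forcing $f \notin c_0$, the desired contradiction. Hence $r(T) < 1$.
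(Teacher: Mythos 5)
Your overall strategy --- power-boundedness via Proposition~\ref{prop_unif_boundedness_on_norming_dual_sets}, then approximate eigenvectors $x_k$ for a unimodular spectral value $\lambda$ (legitimate, since boundary spectrum is approximate point spectrum), functionals $x_k'\in E$ with $|\langle x_k',x_k\rangle|\ge\delta>0$ from the almost norming property, and the observation that the first $N$ entries of $(|\langle x_k', T^n x_k\rangle|^*)_n$ stay bounded below for large $k$ --- is sound as far as it goes, and you correctly isolate the crux: the governing constant $c(x,x')$ from Definition~\ref{def_governing_sequences} must be uniform over the unit balls for the contradiction $\inf_n f_n>0$ to come out. The gap is that your proposed fix does not deliver this uniformity. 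The map $(x,x')\mapsto(|\langle x', T^nx\rangle|^*)_{n\in\bbN_0}$ is neither linear nor bilinear, and there is no ``family of linear maps indexed by the coordinates'' hiding in it: the $k$-th entry of the non-increasing rearrangement of $a$ is $\sup_{|S|=k+1}\min_{n\in S}|a_n|$, a supremum of \emph{minima} of moduli of bilinear forms, and the gauge $\|g\|_f=\inf\{c\ge 0:|g|\le cf\}$ evaluated on rearrangements fails the triangle inequality in $(x,x')$; one only has the quasi-subadditivity $(a+b)^*_{2k}\le a^*_k+b^*_k$, which degrades $f_k$ to $f_{\lfloor k/2\rfloor}$, and indeed the set $\{a\in c_0: |a|^*\in (c_0)_f\}$ need not even be a vector space when $f$ is not doubling. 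Consequently neither the Uniform Boundedness Principle nor the closed-graph/separate-continuity argument for bilinear maps applies, and even a Baire category argument on the closed sets $\{(x,x'): |\langle x',T^\cdot x\rangle|^*\le mf\}$ would leave you the nontrivial task of propagating a bound from a small ball to the whole unit ball through a non-subadditive functional. As written, the ``global constant $C$'' is asserted, not proved, and it is exactly the point on which the whole argument turns.

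For comparison, the paper's proof never needs a uniform governing constant: Lemma~\ref{lem_rearrangement_inequality_infinite} converts the pointwise domination $|\langle x',T^\cdot x\rangle|^*\le c(x,x')\,f$ into the estimate $|\langle x', R(r\lambda,T)x\rangle|\le c(x,x')\,e(r)/(r-1)$ with $e(r):=(r-1)\sum_{n\ge 0}f_n r^{-(n+1)}$, which is a pointwise bound on the genuinely bilinear resolvent pairing; Proposition~\ref{prop_unif_boundedness_on_norming_dual_sets} and the Uniform Boundedness Principle then apply to the \emph{linear} operators $\frac{r-1}{e(r)}R(r\lambda,T)$, and the contradiction comes from $\|R(r\lambda,T)\|\ge 1/\operatorname{dist}(r\lambda,\sigma(T))\ge 1/(r-1)$ together with $e(r)\to 0$ as $r\downarrow 1$. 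To rescue your route you would either have to carry out the uniformity argument honestly (a Baire argument plus the dilation $(a+b)^*_{2k}\le a^*_k+b^*_k$, after which your final contradiction still works with $f_{\lfloor n/2\rfloor}$ in place of $f_n$), or linearize the hypothesis as the paper does before invoking uniform boundedness.
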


For the proof we need the following infinite series version of the rearrangement inequality.

\begin{lemma} \label{lem_rearrangement_inequality_infinite}
	Let $g = (g_n)_{n \in \bbN_0} \in l^1$ be a non-increasing sequence of non-negative real numbers and let $f = (f_n)_{n \in \bbN_0} \in (c_0)_+$. Then we have
	\begin{align*}
		\sum_{n=0}^\infty f^*_n g_n \ge \sum_{n=0}^\infty f_n g_n \text{.}
	\end{align*}
	\begin{proof}
		This follows by an approximation argument from the rearrangement inequality for finite sums \cite[Theorem 368]{Hardy1952}.
	\end{proof}
\end{lemma}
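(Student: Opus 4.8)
The plan is to reduce the infinite inequality to the finite rearrangement inequality \cite[Theorem~368]{Hardy1952} by a truncation argument. First I would record that both series in the statement converge absolutely: since $f \in c_0$ is bounded and $f^*$ consists of the same entries as $f$, we have $f_n g_n \le \|f\|_\infty \, g_n$ and $f^*_n g_n \le \|f\|_\infty \, g_n$ for all $n$, and $g \in l^1$; hence each series is finite and no convergence subtleties arise. All terms being non-negative, the partial sums are monotone and the series may be manipulated freely.

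The core step is an application of the classical finite rearrangement inequality to truncations. Fix $N \in \bbN_0$ and write $(f_0, \dots, f_N)^*$ for the non-increasing rearrangement of the finite tuple $(f_0, \dots, f_N)$. Since $g$ is already non-increasing, among all permutations the sum of products $\sum_{n=0}^N f_{\sigma(n)} g_n$ is largest when the first factor is arranged in non-increasing order as well; taking $\sigma$ to be the identity and comparing with the maximizing arrangement gives
\[
	\sum_{n=0}^N f_n g_n \le \sum_{n=0}^N (f_0, \dots, f_N)^*_n \, g_n \text{.}
\]

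Next I would compare the finite rearrangement with the infinite one entrywise. For each $0 \le n \le N$, the number $(f_0, \dots, f_N)^*_n$ is the $(n+1)$-st largest among $f_0, \dots, f_N$, whereas $f^*_n$ is the $(n+1)$-st largest among all of $f_0, f_1, f_2, \dots$. As the latter is taken over a superset of non-negative numbers, one obtains $(f_0, \dots, f_N)^*_n \le f^*_n$. Combining this with $g_n \ge 0$ and the previous display yields
\[
	\sum_{n=0}^N f_n g_n \le \sum_{n=0}^N f^*_n g_n \le \sum_{n=0}^\infty f^*_n g_n \text{.}
\]

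Finally, letting $N \to \infty$ on the left-hand side (the partial sums increase to $\sum_{n=0}^\infty f_n g_n$ since every term is non-negative) gives the claimed inequality. I expect the only point requiring genuine care to be the entrywise comparison of the finite and infinite non-increasing rearrangements; this rests on the fact that $f \in (c_0)_+$ tends to $0$, so that for every threshold only finitely many entries exceed it and the notion of "$(n+1)$-st largest entry" is well defined over both the truncation and the full sequence. Everything else is either the finite rearrangement inequality quoted above or routine bookkeeping with non-negative series.
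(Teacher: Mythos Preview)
Your argument is correct and is precisely the ``approximation argument from the rearrangement inequality for finite sums'' that the paper invokes, only spelled out in detail: truncate, apply the finite rearrangement inequality since $g$ is non-increasing, dominate the finite decreasing rearrangement entrywise by $f^*$, and pass to the limit using non-negativity.
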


\begin{proof}[Proof of Theorem~\ref{thm_governing_sequence_for_weak_decay_rates_implies_spec_rad_smaller_than_one}]
	(a) By assumption, the sequence $(\langle x', T^n x \rangle)_{n \in \bbN_0}$ is contained in $c_0$, and in particular bounded, for each $x\in X$, $x' \in E$. Hence, it follows from Proposition~\ref{prop_unif_boundedness_on_norming_dual_sets} and from the Uniform Boundedness Principle that $T$ is power-bounded. Thus, $r(T) \le 1$ and we have to show that there is no spectral value of $T$ located on the complex unit circle. We assume for a contradiction that $\lambda \in \bbC$, $|\lambda| = 1$, is a spectral value of $T$. \par 
	(b) For every $r > 1$, we use the abbreviation $e(r) := (r-1) \sum_{n=0}^\infty \frac{f_n}{r^{n+1}}$. We may assume that $f \not= 0$, and thus we have $e(r) > 0$ for all $r > 1$. If $x \in X$ and $x' \in E$, then we have $|\langle x', T^n x \rangle|^* \le c f_n$ for all $n \in \bbN_0$ and some number $c > 0$ (where $c$ might depend on $x$ and $x'$). Using the Neumann series representation of $R(r\lambda,T)$ and the rearrangement inequality from Lemma~\ref{lem_rearrangement_inequality_infinite}, we obtain the estimate
	\begin{align*}
		|\langle x', R(r\lambda,T) x \rangle| \le \sum_{n=0}^\infty \frac{|\langle x', T^n x \rangle|}{r^{n+1}} \le \sum_{n=0}^\infty \frac{|\langle x', T^n x \rangle|^*}{r^{n+1}} \le c \sum_{n=0}^\infty \frac{f_n}{r^{n+1}} = c \frac{e(r)}{r-1}
	\end{align*}
	whenever $r > 1$. By Proposition~\ref{prop_unif_boundedness_on_norming_dual_sets} and by the Uniform Bounded Principle we conclude that the operator family $\{\frac{r-1}{e(r)}R(r\lambda,T): r > 1\}$ is bounded. On the other hand, we have
	\begin{align*}
		||\frac{r-1}{e(r)} R(r\lambda,T)|| \ge \frac{r-1}{e(r)}\frac{1}{\operatorname{dist}(r\lambda, \sigma(T))} = \frac{1}{e(r)}
	\end{align*}
	for each $r > 1$. \par 
	(c) To obtain a contradiction, let us now show that $e(r) \to 0$ as $r \downarrow 1$: If $\varepsilon > 0$, then we can find an integer $n_0 \in \bbN$ such that $f_n \le \varepsilon$ for all $n \ge n_0$. Set $\delta := \frac{\varepsilon}{n_0 ||f||_\infty}$. Then we have for all $r \in (1,1+\delta)$ that
	\begin{align*}
		e(r) \le (r-1) & \sum_{n=0}^{n_0-1} f_n + (r-1) \sum_{n=n_0}^\infty \frac{\varepsilon}{r^{n+1}} \le \\
		& \le \delta n_0 ||f||_\infty + (r-1) \sum_{n=0}^\infty \frac{\varepsilon}{r^{n+1}} = 2 \varepsilon \text{.}
	\end{align*}
	Hence we have indeed $e(r) \to 0$ as $r \downarrow 1$ and thus we arrived at the desired contradiction.
\end{proof}

The main idea of the above proof is to use the growth behaviour of the resolvent close to the spectral circle; the same approach was used by Weiss to prove the main result in \cite{Weiss1989}.

In fact, one may allow the decay rates of the sequences $(\langle x', T^nx \rangle)_{n \in \bbN_0}$ in Theorem~\ref{thm_governing_sequence_for_weak_decay_rates_implies_spec_rad_smaller_than_one} to vary within a \emph{countable} set of possible rates; this follows from the next proposition and its corollary.

\begin{proposition} \label{prop_countable_union_of_principal_ideals_in_c_0}
	Let $\emptyset \not= F \subset (c_0)_+$ be an at most countable set. Then there is a sequence $g \in (c_0)_+$ such that $(c_0)_f \subset (c_0)_g$ for all $f \in F$.
	\begin{proof}
		We may assume that $f \not= 0$ for all $f \in F$. Let $F = \{f^{(k)}: k \in \bbN\}$ (where some of the functions $f^{(k)}$ coincide if $F$ is finite). Now, define $g = \sum_{k=0}^\infty \frac{1}{2^k||f^{(k)}||_\infty} f^{(k)}$, where the series converges with respect to the supremum norm in $c_0$. Then clearly $g \in (c_0)_+$. Moreover, we obtain for each $k \in \bbN$ that $f^{(k)} \le 2^k||f^{(k)}||\, g$, which implies $(c_0)_{f^{(k)}} \subset (c_0)_g$.
	\end{proof}
\end{proposition}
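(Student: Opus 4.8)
The plan is to build a single dominating sequence $g$ as a weighted sum of the (at most countably many) sequences in $F$, choosing the weights small enough to force the series to converge inside $c_0$. First I would dispose of the degenerate cases: if some $f \in F$ equals $0$, then $(c_0)_f = \{0\}$ is contained in every principal ideal and such $f$ may be discarded; if after this $F$ becomes empty, take $g$ to be any strictly positive null sequence. Otherwise enumerate $F \setminus \{0\} = \{f^{(k)} : k \in \bbN\}$, allowing repetitions so that the enumeration is always legitimate regardless of whether $F$ is finite or infinite.

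Next I would define $g := \sum_{k=1}^\infty 2^{-k} \|f^{(k)}\|_\infty^{-1} f^{(k)}$. The decisive point is that the $k$-th summand has supremum norm $2^{-k}$, so the partial sums form a Cauchy sequence in $(\ell^\infty, \|\cdot\|_\infty)$; since $c_0$ is a closed subspace of $\ell^\infty$ and every partial sum lies in $(c_0)_+$, the limit $g$ exists and again belongs to $(c_0)_+$. Verifying that this countable sum of null sequences is still a null sequence is the only place where the countability hypothesis (and hence the choice of summable weights $2^{-k}$) is genuinely used, and it is the main obstacle one has to address; without control on the number of sequences there is no reason such a dominating $g$ should exist in $c_0$ at all.

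Finally, for each fixed $k$ the nonnegativity of all the remaining summands gives the pointwise inequality $f^{(k)} \le 2^k \|f^{(k)}\|_\infty \, g$. Hence if $h \in (c_0)_{f^{(k)}}$, say $|h| \le c f^{(k)}$ with $c \ge 0$, then $|h| \le \bigl(c\, 2^k \|f^{(k)}\|_\infty\bigr) g$, so $h \in (c_0)_g$. Since every $f \in F$ is either $0$ or one of the $f^{(k)}$, this yields $(c_0)_f \subset (c_0)_g$ for all $f \in F$, completing the argument.
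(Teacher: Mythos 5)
Your proposal is correct and follows essentially the same route as the paper: enumerate the nonzero elements of $F$, form $g = \sum_k 2^{-k}\|f^{(k)}\|_\infty^{-1} f^{(k)}$ (convergent in the sup norm since $c_0$ is closed in $\ell^\infty$), and read off $f^{(k)} \le 2^k\|f^{(k)}\|_\infty\, g$. The only difference is that you spell out the degenerate cases and the Cauchy argument in more detail, which the paper leaves implicit.
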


\begin{corollary} \label{cor_coutable_governing_set_for_weak_decay_rates_implies_spec_rad_smaller_then_one}
	Let $X$ be a complex Banach space, let $E \subset X'$ be a closed, almost norming vector subspace and let $T \in \calL(X)$. Let $\emptyset \not= F \subset (c_0)_+$ be an at most countable set and suppose that $F$ governs $(\langle x',T^nx \rangle)_{n \in \bbN_0}$ for all $x \in X$, $x' \in E$. Then $r(T) < 1$.
	\begin{proof}
		Due to Proposition~\ref{prop_countable_union_of_principal_ideals_in_c_0}, there is an element $f \in (c_0)_+$ which governs the sequence $(\langle x',T^nx \rangle)_{n \in \bbN_0}$ for all $x \in X$, $x' \in E$. Hence, the assertion follows from Theorem~\ref{thm_governing_sequence_for_weak_decay_rates_implies_spec_rad_smaller_than_one}.
	\end{proof}
\end{corollary}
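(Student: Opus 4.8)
The plan is to reduce the countable case to the single-sequence situation already settled in Theorem~\ref{thm_governing_sequence_for_weak_decay_rates_implies_spec_rad_smaller_than_one}, using the merging construction provided by Proposition~\ref{prop_countable_union_of_principal_ideals_in_c_0}.

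First I would apply Proposition~\ref{prop_countable_union_of_principal_ideals_in_c_0} to the at most countable set $F \subset (c_0)_+$, after discarding the zero sequence if necessary (this is harmless, since the only sequence governed by the zero sequence is the zero sequence itself, which is governed by every $f \in (c_0)_+$ anyway). This produces a single sequence $g \in (c_0)_+$ such that $(c_0)_f \subset (c_0)_g$ for all $f \in F$; explicitly, after enumerating $F = \{f^{(k)}: k \in \bbN\}$, one may take $g = \sum_{k \in \bbN} 2^{-k}\|f^{(k)}\|_\infty^{-1} f^{(k)}$, the series converging in $(c_0)_+$ with respect to $\|\cdot\|_\infty$.

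Next I would observe that this single sequence $g$ already governs $(\langle x', T^n x\rangle)_{n \in \bbN_0}$ for all $x \in X$, $x' \in E$. Indeed, fix such a pair and set $a := (\langle x', T^n x\rangle)_{n \in \bbN_0}$. Since $F$ governs $a$, we have $a \in c_0$ and there is an $f \in F$ with $(|a_n|^*)_{n \in \bbN_0} \in (c_0)_f \subset (c_0)_g$; by Definition~\ref{def_governing_sequences} this says precisely that $g$ governs $a$. An application of Theorem~\ref{thm_governing_sequence_for_weak_decay_rates_implies_spec_rad_smaller_than_one}, with $g$ in the role of $f$, then yields $r(T) < 1$.

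As for where the difficulty lies: once Proposition~\ref{prop_countable_union_of_principal_ideals_in_c_0} and Theorem~\ref{thm_governing_sequence_for_weak_decay_rates_implies_spec_rad_smaller_than_one} are available, this corollary is a purely formal consequence of the two, and no genuine obstacle remains — the only thing one must notice is that the governing property is preserved when the generating principal ideal is enlarged, which is immediate from the definition. The real content has been moved into Proposition~\ref{prop_countable_union_of_principal_ideals_in_c_0}, whose proof must arrange that a countable family of null sequences is simultaneously dominated, up to constants, by one null sequence; that is the step where convergence to $0$ could conceivably be lost, and where the weighting by $2^{-k}\|f^{(k)}\|_\infty^{-1}$ does the work.
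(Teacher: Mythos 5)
Your proposal is correct and follows essentially the same route as the paper: invoke Proposition~\ref{prop_countable_union_of_principal_ideals_in_c_0} to replace the countable set $F$ by a single governing sequence $g$, note that governing is preserved under enlarging the principal ideal, and conclude via Theorem~\ref{thm_governing_sequence_for_weak_decay_rates_implies_spec_rad_smaller_than_one}. Your extra remarks on discarding the zero sequence and on where the real work lies are accurate but not needed beyond what the paper's two-line proof already records.
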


\section{Weak decay rates governed by summability conditions} \label{section_summability_of_errors}

To ensure that the conditions of Theorem~\ref{thm_governing_sequence_for_weak_decay_rates_implies_spec_rad_smaller_than_one} or Corollary~\ref{cor_coutable_governing_set_for_weak_decay_rates_implies_spec_rad_smaller_then_one} are satisfied, one can e.g.~impose some kind of summability condition on the sequences $(\langle x', T^nx \rangle)_{n \in \bbN_0}$. This is the content of the next lemma and the subsequent theorem.

\begin{lemma} \label{lem_series_smaller_then_infty_yields_governing_sequence}
	Let $A$ be a set of complex-valued sequences $a = (a_n)_{n \in \bbN_0}$ and let $\varphi: [0,\infty) \to [0,\infty)$ be a non-decreasing function which is strictly positive on $(0,\infty)$. If
	\begin{align*}
		\sum_{n=0}^\infty \varphi(|a_n|) < \infty
	\end{align*}
	for all $a \in A$, then there is an $f \in (c_0)_+$ which governs all elements of $A$.
	\begin{proof}
		(a) Since $\varphi$ is strictly positive on $(0,\infty)$, we conclude that $A \subset c_0$. \par 
		(b) Let $c_{00}$ be the set of all sequences with only finitely many non-zero entries. If $A \subset c_{00}$, then $A \subset (c_0)_f$ for any $f \in (c_0)_+$ which has no zero-entries, so we may assume for the rest of the proof that there is at least one element $b = (b_n)_{n \in \bbN_0} \in A \setminus c_{00}$. \par 
		Let us show that this implies $\varphi(x) \downarrow 0$ as $x \downarrow 0$: If $\varepsilon > 0$, then we can find an index $n_0 \in \bbN_0$ such that $\varphi(|b_n|) < \varepsilon$ for all $n \ge n_0$. Since $b \not\in c_{00}$, we have $\delta := |b_{n_1}| > 0$ for some $n_1 \ge n_0$. As $\varphi$ is non-decreasing, we conclude that $\varphi(x) \le \varphi(\delta) < \varepsilon$ for all $x \in [0,\delta)$. Hence, we have indeed $\varphi(x) \downarrow 0$ as $x \downarrow 0$ \par 
		(c) Next, we construct the sequence $f$: Let $(m_k)_{k \in \bbN} \subset \bbN_0$ be a strictly increasing sequence of integers such that
		\begin{align*}
			m_k \varphi(\frac{1}{k}) \ge 1 \quad \text{for each } k \in \bbN \text{;}
		\end{align*}
		such a sequence $(m_k)_{k \in \bbN}$ clearly exists since $\varphi(\frac{1}{k}) > 0$ for all $k \in \bbN$. We define $f \in (c_0)_+$ by $f(j) = 1$ for $0 \le j \le m_1-1$ and $f(j) = \frac{1}{k-1}$ for $m_{k-1} \le j \le m_k - 1$ whenever $k \ge 2$. \par 
		(d) Let $a \in A$, and first assume that $\sum_{n=0}^\infty \varphi(|a_n|) \le 1$. Then for each $k \in \bbN$ at most $m_k$ entries of the sequence $|a|$ can be equal to or larger then $\frac{1}{k}$. Keeping in mind that the indices of the sequence $a$ start at $0$ we conclude that $|a_n|^* \le f(n)$ for each $n \ge m_1$ and hence we have $|a|^* \le cf$ for some $c > 0$. \par 
		Now only assume that $\sum_{n=0}^\infty \varphi(|a_n|) < \infty$. Let $\mu \in (0,1]$ and observe that, due to (b), $\varphi(\mu |a_n|) \downarrow 0$ as $\mu\downarrow 0$ for each $n \in \bbN_0$. By the Dominated Convergence Theorem, this implies that
		\begin{align*}
			\sum_{n=0}^\infty \varphi(\mu |a_n|) \downarrow 0 \quad \text{as} \quad \mu \downarrow 0 \text{.}
		\end{align*}
		Hence, we can find a number $\mu \in (0,1]$ such that $\sum_{n=0}^\infty \varphi(\mu |a_n|) \le 1$. As shown before, this implies that $\mu |a|^* = (\mu |a|)^* \le c f$ for some $c > 0$ and hence $|a|^* \le \frac{c}{\mu}f$. Thus $f$ governs $a$.
	\end{proof}
\end{lemma}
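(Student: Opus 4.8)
The plan is to convert the summability hypothesis into a uniform, rearrangement-type decay estimate and then to absorb all of these estimates into a single envelope sequence $f$. First I would record two immediate consequences of the hypotheses. Since $\varphi$ is strictly positive on $(0,\infty)$, any $a$ with $\sum_n\varphi(|a_n|)<\infty$ satisfies $\varphi(|a_n|)\to 0$, so $|a_n|$ cannot stay bounded away from $0$ along a subsequence; hence $A\subseteq c_0$, which is exactly what ``$f$ governs $a$'' presupposes. Secondly I would dispose of the degenerate case: if every $a\in A$ has only finitely many nonzero entries, then any strictly positive $f\in(c_0)_+$ governs all of $A$, since each $|a|^*$ is eventually $0$ and on its finite initial block one bounds $|a_n|^*$ by a suitable multiple of $f(n)$. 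So from now on I may fix some $b\in A$ with infinitely many nonzero entries.

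Next I would extract the only regularity of $\varphi$ the construction needs. From $\sum_n\varphi(|b_n|)<\infty$ we get $\varphi(|b_n|)\to 0$, and since $b\notin c_{00}$ there are arbitrarily large $n$ with $|b_n|>0$; monotonicity of $\varphi$ then forces $\varphi(x)\downarrow 0$ as $x\downarrow 0$ (in particular $\varphi(0)=0$). The heart of the argument is the counting observation that whenever $\sum_n\varphi(|a_n|)\le 1$, at most $1/\varphi(1/k)$ of the indices $n$ can satisfy $|a_n|\ge 1/k$, because every such term contributes at least $\varphi(1/k)>0$ to the series. Fixing a strictly increasing integer sequence $(m_k)_{k\in\bbN}$ with $m_k\varphi(1/k)\ge 1$, this says that $|a_n|^*<1/k$ as soon as $n\ge m_k$, and — crucially — this holds uniformly over all $a$ with $\sum_n\varphi(|a_n|)\le 1$. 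That uniformity is what allows a choice of $f$ not depending on $a$: I would set $f(n)=1$ for $0\le n\le m_1-1$ and $f(n)=1/(k-1)$ for $m_{k-1}\le n\le m_k-1$ when $k\ge 2$. Since $m_k\to\infty$ we have $f\in(c_0)_+$, and $f$ has no zero entries.

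It then remains to check that $f$ governs every $a\in A$. If $\sum_n\varphi(|a_n|)\le 1$, the counting bound gives $|a_n|^*\le f(n)$ for all $n\ge m_1$ (on the block $m_{k-1}\le n\le m_k-1$ one has $|a_n|^*<1/(k-1)=f(n)$), while for $n<m_1$ one has $|a_n|^*\le\|a\|_\infty=\|a\|_\infty f(n)$ since $f(n)=1$ there; hence $|a|^*\le cf$ with $c=\max(1,\|a\|_\infty)$, so $f$ governs $a$. For a general $a\in A$ I would rescale: for $\mu\in(0,1]$ monotonicity gives $\varphi(\mu|a_n|)\le\varphi(|a_n|)$, while by the regularity above $\varphi(\mu|a_n|)\to 0$ as $\mu\downarrow 0$ for each fixed $n$; the Dominated Convergence Theorem then yields $\sum_n\varphi(\mu|a_n|)\to 0$ as $\mu\downarrow 0$. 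Picking $\mu$ with $\sum_n\varphi(\mu|a_n|)\le 1$ and applying the previous case to $\mu a$ gives $\mu|a|^*=(\mu|a|)^*\le cf$ for some $c>0$, i.e.\ $|a|^*\le(c/\mu)f$, so $f$ governs $a$ here too.

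I expect the main obstacle to be getting the counting step and its bookkeeping exactly right: choosing $(m_k)$ and the step heights of $f$ so that $|a_n|^*\le f(n)$ holds for \emph{all} large $n$ and uniformly in $a$, and handling the index convention (sequences indexed from $0$) correctly when passing from ``at most $m_k$ indices $n$ have $|a_n|\ge 1/k$'' to ``$|a_n|^*<1/k$ for $n\ge m_k$''. The remaining ingredients — the reduction to $\sum_n\varphi(|a_n|)\le 1$ via scaling and dominated convergence, and the elementary consequences of strict positivity and monotonicity — should be routine.
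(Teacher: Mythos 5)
Your proposal is correct and follows essentially the same route as the paper's own proof: the same reduction to the case $\sum_n \varphi(|a_n|) \le 1$ via dominated convergence, the same counting argument based on a sequence $(m_k)$ with $m_k\varphi(1/k)\ge 1$, and the same step-function envelope $f$. The only differences are expository (you spell out the bound on the initial block $n < m_1$ and the constant $c = \max(1, \|a\|_\infty)$ slightly more explicitly).
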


We now immediately obtain our main result, which is a generalization of \cite[Theorem~3.3]{Neerven1995}.

\begin{theorem} \label{thm_one_of_countably_many_series_smaller_then_infty_implies_spec_rad_smaller_than_one}
	Let $X$ be a complex Banach space, let $E \subset X'$ be a closed, almost norming subspace and let $T \in \calL(X)$. Let $\Phi$ be an at most countable set of non-decreasing functions $\varphi: [0,\infty) \to [0,\infty)$ which are strictly positive on $(0,\infty)$. If for each $x \in X$, $x' \in E$ satisfying $||x|| \le 1$, $||x'|| \le 1$ there is a function $\varphi \in \Phi$ which satisfies
	\begin{align}
		\sum_{n=0}^\infty \varphi(|\langle x', T^n x \rangle|) < \infty \text{,} \label{form_summability_condition_in_general_theorem}
	\end{align}
	then $r(T) < 1$.
	\begin{proof}
		It follows from Lemma~\ref{lem_series_smaller_then_infty_yields_governing_sequence} that there is an at most countable set $\emptyset \not= F \subset (c_0)_+$ which governs $(\langle x', T^nx \rangle)_{n \in \bbN_0}$ for all $x \in X$, $x' \in E$ satisfying $||x|| \le 1$, $||x'|| \le 1$. By a scaling argument, $F$ governs $(\langle x', T^nx \rangle)_{n \in \bbN_0}$ even for all $x \in X$, $x' \in E'$. Corollary~\ref{cor_coutable_governing_set_for_weak_decay_rates_implies_spec_rad_smaller_then_one} thus implies that $r(T) < 1$.
	\end{proof}
\end{theorem}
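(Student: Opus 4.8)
The plan is to reduce the statement to Corollary~\ref{cor_coutable_governing_set_for_weak_decay_rates_implies_spec_rad_smaller_then_one} by producing, from the countable family $\Phi$, an at most countable set $F \subset (c_0)_+$ which governs all the relevant orbit sequences. First I would dispose of the normalization: the hypothesis only controls $x, x'$ in the unit balls, so I need to keep track of how the summability condition transforms under scaling. Since $\varphi$ is only assumed non-decreasing (not homogeneous), scaling $x$ or $x'$ by a factor $\le 1$ can only decrease $|\langle x', T^n x\rangle|$ pointwise, hence only decrease $\varphi(|\langle x', T^n x\rangle|)$, so finiteness of the series is preserved on the unit balls; the genuine rescaling from the unit balls to all of $X \times E$ is the delicate point and is exactly the ``scaling argument'' already invoked in Lemma~\ref{lem_series_smaller_then_infty_yields_governing_sequence}'s proof, namely that $|a|^* \le cf$ implies $|\lambda a|^* \le |\lambda| c f$, so governing is stable under multiplying a sequence by a scalar.

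Next, for each fixed $\varphi \in \Phi$ I would apply Lemma~\ref{lem_series_smaller_then_infty_yields_governing_sequence} to the set
\[
A_\varphi := \Bigl\{(\langle x', T^n x\rangle)_{n \in \bbN_0} : x \in X,\ x' \in E,\ \|x\| \le 1,\ \|x'\| \le 1,\ \textstyle\sum_{n=0}^\infty \varphi(|\langle x', T^n x\rangle|) < \infty\Bigr\},
\]
obtaining a single $f_\varphi \in (c_0)_+$ which governs every element of $A_\varphi$. Running over $\varphi \in \Phi$ yields the at most countable set $F := \{f_\varphi : \varphi \in \Phi\}$. By hypothesis every pair $(x, x')$ in the two unit balls lands in some $A_\varphi$, so $F$ governs $(\langle x', T^n x\rangle)_{n\in\bbN_0}$ for all such pairs; combined with the scaling remark above, $F$ governs these sequences for all $x \in X$ and $x' \in E$ without the norm restriction. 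Corollary~\ref{cor_coutable_governing_set_for_weak_decay_rates_implies_spec_rad_smaller_then_one} then gives $r(T) < 1$.

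The main obstacle I anticipate is purely bookkeeping rather than conceptual: making sure the various quantifiers line up correctly — in particular that the same $f_\varphi$ works simultaneously for all pairs assigned to $\varphi$ (which is precisely what Lemma~\ref{lem_series_smaller_then_infty_yields_governing_sequence} delivers, since it produces one $f$ governing an entire set $A$), and that $E$ being only almost norming rather than all of $X'$ causes no trouble (it does not, because Corollary~\ref{cor_coutable_governing_set_for_weak_decay_rates_implies_spec_rad_smaller_then_one} already allows an arbitrary closed almost norming subspace). Everything else — power-boundedness of $T$, the resolvent growth estimate — is already packaged inside Theorem~\ref{thm_governing_sequence_for_weak_decay_rates_implies_spec_rad_smaller_than_one} and its corollary, so no further analytic work is needed.
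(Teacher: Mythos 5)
Your proposal is correct and follows essentially the same route as the paper: apply Lemma~\ref{lem_series_smaller_then_infty_yields_governing_sequence} once per $\varphi \in \Phi$ to obtain a countable governing set $F$, extend from the unit balls to all of $X \times E$ by the scaling/homogeneity of the governing relation, and conclude via Corollary~\ref{cor_coutable_governing_set_for_weak_decay_rates_implies_spec_rad_smaller_then_one}. Your write-up in fact spells out the scaling step and the decomposition into the sets $A_\varphi$ more explicitly than the paper does.
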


\begin{corollary} \label{cor_weak_decay_rates_in_union_of_l_p_spaces_implies_spec_rad_smaller_than_one}
	Let $X$ be a complex Banach space, let $E \subset X'$ be a closed, almost norming subspace and let $T \in \calL(X)$. Suppose that for all $x \in X$, $x' \in E$ there is a $p \in [1,\infty)$ such that $(\langle x', T^nx \rangle)_{n \in \bbN_0} \in l^p$. Then $r(T) < 1$.
	\begin{proof}
		Let $1 \le p_k \to \infty$, define $\varphi_k: [0,\infty) \to [0,\infty)$, $\varphi_k(x) = x^{p_k}$ for all $k \in \bbN$ and let $\Phi = \{\varphi_k: k \in \bbN\}$. For each $x \in X$, $x' \in E$ we can find a number $p \in [1,\infty)$ such that $(\langle x', T^nx \rangle)_{n \in \bbN_0} \in l^p$; if $k \in \bbN$ is sufficiently large we have $l^p \subset l^{p_k}$ which implies that condition (\ref{form_summability_condition_in_general_theorem}) us fulfilled for the function $\varphi_k$. Hence, the assertion follows from Theorem~\ref{thm_one_of_countably_many_series_smaller_then_infty_implies_spec_rad_smaller_than_one}.
	\end{proof}
\end{corollary}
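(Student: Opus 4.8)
The plan is to reduce Corollary~\ref{cor_weak_decay_rates_in_union_of_l_p_spaces_implies_spec_rad_smaller_than_one} to Theorem~\ref{thm_one_of_countably_many_series_smaller_then_infty_implies_spec_rad_smaller_than_one} by exhibiting a suitable \emph{countable} family $\Phi$ of test functions $\varphi$. The natural choice is to take an increasing, unbounded sequence of exponents, say $p_k = k$ (or any $1 \le p_k \to \infty$), and put $\varphi_k(x) := x^{p_k}$; each $\varphi_k$ is non-decreasing on $[0,\infty)$ and strictly positive on $(0,\infty)$, so the set $\Phi = \{\varphi_k : k \in \bbN\}$ meets the hypotheses of the theorem.

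First I would fix $x \in X$ and $x' \in E$ and use the assumption to obtain some $p \in [1,\infty)$ with $(\langle x', T^n x\rangle)_{n} \in l^p$. Since the sequence lies in $c_0$ (in fact it is bounded), its tail entries are eventually $\le 1$, so from $p_k \to \infty$ we get $l^p \subset l^{p_k}$ for all sufficiently large $k$; hence $\sum_n |\langle x', T^n x\rangle|^{p_k} < \infty$, which is exactly condition (\ref{form_summability_condition_in_general_theorem}) for the function $\varphi_k \in \Phi$. This verifies the hypotheses of Theorem~\ref{thm_one_of_countably_many_series_smaller_then_infty_implies_spec_rad_smaller_than_one}, whose conclusion $r(T) < 1$ is precisely what is claimed.

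There is essentially no obstacle here: the only mildly delicate point is that Theorem~\ref{thm_one_of_countably_many_series_smaller_then_infty_implies_spec_rad_smaller_than_one} requires the summability condition only for $x, x'$ in the unit balls, whereas the corollary is stated for all $x \in X$, $x' \in E$; but this direction is trivial, since the unit-ball hypothesis is weaker. (Conversely one could restrict attention to $\|x\|, \|x'\| \le 1$ from the outset, as the $l^p$-membership for a scaled vector is equivalent to that for the original.) The containment $l^p \subset l^{p_k}$ for large $k$ is the standard inclusion of $l^p$-spaces valid once the sequence is bounded by $1$ in modulus from some index on, which holds because the sequence converges to $0$. Thus the proof is just a matter of assembling these observations and invoking the preceding theorem.

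\begin{proof}
	Let $1 \le p_k \to \infty$, define $\varphi_k: [0,\infty) \to [0,\infty)$, $\varphi_k(x) = x^{p_k}$ for all $k \in \bbN$ and let $\Phi = \{\varphi_k: k \in \bbN\}$. Each $\varphi_k$ is non-decreasing and strictly positive on $(0,\infty)$. For each $x \in X$, $x' \in E$ we can find a number $p \in [1,\infty)$ such that $(\langle x', T^nx \rangle)_{n \in \bbN_0} \in l^p$; this sequence lies in $c_0$, so its entries are eventually bounded by $1$ in modulus, and hence $l^p \subset l^{p_k}$ for all sufficiently large $k$. For such $k$ we conclude that $\sum_{n=0}^\infty \varphi_k(|\langle x', T^n x\rangle|) = \sum_{n=0}^\infty |\langle x', T^n x\rangle|^{p_k} < \infty$, i.e.\ condition (\ref{form_summability_condition_in_general_theorem}) is fulfilled for the function $\varphi_k \in \Phi$. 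Hence the hypotheses of Theorem~\ref{thm_one_of_countably_many_series_smaller_then_infty_implies_spec_rad_smaller_than_one} are satisfied, and the assertion follows.
\end{proof}
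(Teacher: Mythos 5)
Your proposal is correct and follows essentially the same route as the paper: choose exponents $1 \le p_k \to \infty$, set $\varphi_k(x) = x^{p_k}$, use the inclusion $l^p \subset l^{p_k}$ for large $k$ to verify condition (\ref{form_summability_condition_in_general_theorem}), and invoke Theorem~\ref{thm_one_of_countably_many_series_smaller_then_infty_implies_spec_rad_smaller_than_one}. The extra remarks on the unit-ball restriction and on why the $l^p$-inclusion holds are harmless elaborations of the same argument.
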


\section{Stability of $C_0$-semigroups} \label{section_stability_of_c_0_semigroups}

Our techniques from Section~\ref{section_principal_ideals_and_convergence_rates} relied on controlling the weak decay rate of operator powers be certain ideals in $c_0$. It seems unclear how to adapt these techniques to the setting of $C_0$-semigroups. However, we can use a more concrete approach to show the following result, which generalizes results from \cite{Weiss1988}. Recall that for a $C_0$-semigroup $(e^{tA})_{t \ge 0}$, we denote by $s_0(A)$ the abscissa of uniform boundedness of the resolvent of $A$.

\begin{theorem} \label{thm_finite_weak_p_integral_for_semigroup_yields_spectral_estimate}
	Let $(e^{tA})_{t \ge 0}$ be a $C_0$-semigroup on a complex Banach space $X$. Let $E \subset X'$ be a closed, almost norming subspace and suppose that for each $x \in X$, $x' \in E$ there is a $p \in [1,\infty)$ such that
	\begin{align}
		\int_0^\infty |\langle x', e^{tA}x \rangle|^p \, dt < \infty \text{.} \label{form_finite_integral_p_norm}
	\end{align}
	Then $s_0(A) < 0$.
\end{theorem}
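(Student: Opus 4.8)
The plan is to mimic the resolvent-growth argument of Theorem~\ref{thm_governing_sequence_for_weak_decay_rates_implies_spec_rad_smaller_than_one} in the continuous setting, replacing the Neumann series by the Laplace transform representation of the resolvent. First I would reduce to the statement that the semigroup is bounded and that $s_0(A)\le 0$, then improve the inequality to a strict one. Boundedness of $(e^{tA})_{t\ge0}$ follows from Proposition~\ref{prop_unif_boundedness_on_norming_dual_sets} together with the uniform boundedness principle, applied after observing that each orbit $t\mapsto\langle x',e^{tA}x\rangle$ lies in some $L^p(0,\infty)$ and is continuous, hence (because $e^{tA}x$ is bounded on compact time intervals and the $L^p$ condition controls the tail) the scalar orbits are bounded; a standard Datko-type observation then gives uniform boundedness of the semigroup itself. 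In particular $s(A)\le s_0(A)\le 0$ is not yet available, so the real work is to produce uniform resolvent bounds on a half-plane $\{\re z\ge 0\}$.

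The key step is a partition of the dual pairs according to the exponent $p$. For $k\in\bbN$ let $A_k=\{(x,x')\in X\times E:\ \|x\|\le 1,\ \|x'\|\le1,\ \int_0^\infty|\langle x',e^{tA}x\rangle|^p\,dt<\infty\text{ with }p=k\}$; since every admissible pair lies in some $A_k$ and the $A_k$ are increasing in a suitable sense (using $L^p\cap L^\infty\subset L^q$ for $q\ge p$ on the half-line together with boundedness of the orbits), the sets $B_k=\{(x,x'):\ \int_0^\infty|\langle x',e^{tA}x\rangle|^k\,dt\le 1\}$ cover all normalized pairs as $k$ ranges over $\bbN$ after an additional scaling. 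For a fixed $k$ and $\re z=:s>0$, Hölder's inequality applied to the Laplace representation $\langle x',R(z,A)x\rangle=\int_0^\infty e^{-zt}\langle x',e^{tA}x\rangle\,dt$ yields
\begin{align*}
	|\langle x',R(z,A)x\rangle|\le\Bigl(\int_0^\infty e^{-s t k'}\,dt\Bigr)^{1/k'}\Bigl(\int_0^\infty|\langle x',e^{tA}x\rangle|^k\,dt\Bigr)^{1/k}\le (sk')^{-1/k'}
\end{align*}
for $(x,x')\in B_k$, where $k'$ is the conjugate exponent. This is the analogue of the estimate $|\langle x',R(r\lambda,T)x\rangle|\le c\,e(r)/(r-1)$ in the discrete proof: it shows that for each admissible pair the resolvent grows no faster than a fixed power of $1/s$ as $s\downarrow 0$, and by Proposition~\ref{prop_unif_boundedness_on_norming_dual_sets} and the uniform boundedness principle this pointwise bound upgrades to $\|R(z,A)\|\le C_k\,s^{-1/k'}$ on the vertical line $\re z=s$ — but only after one has split $X\times E$ into countably many pieces.

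The main obstacle, as in the discrete case, is to contradict the blow-up of the resolvent near a putative boundary spectral point. If some $i\beta\in\sigma(A)$ with $\beta\in\bbR$, then $\|R(s+i\beta,A)\|\ge 1/\operatorname{dist}(s+i\beta,\sigma(A))=1/s$ for $s>0$, which eventually beats $C_k s^{-1/k'}$ since $1/k'<1$ — this forces the contradiction, but the subtlety is that the constant $C_k$ a priori depends on which piece $B_k$ one is on, and a single $\beta$ might require arbitrarily large $k$. To handle this I would argue locally: fix the spectral point $i\beta$, restrict attention to $z$ in a small half-disc around it, and use that on any such fixed bounded region the covering $\bigcup_k B_k$ of the (bounded, hence equicontinuous-after-pairing) family of normalized pairs is already achieved for a single $k$ by a Baire category argument — the map $(x,x',z)\mapsto\langle x',R(z,A)x\rangle$ is continuous, the family $R(z,A)$ for $z$ near $i\beta$ with $\re z>0$ is covered by the sets on which the above bound holds, and one $B_k$ must be "large enough" to pin down the resolvent norm on a neighborhood. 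Once a single exponent $k$ controls the resolvent near $i\beta$, the inequality $1/s\le C_k s^{-1/k'}$ for all small $s>0$ is absurd, so $\{\re z\ge 0\}\subset\bbC\setminus\sigma(A)$; the same estimate shows $R(\cdot,A)$ is uniformly bounded there, whence $s_0(A)<0$ after a final compactness/continuity argument pushing the half-plane of uniform boundedness slightly to the left.
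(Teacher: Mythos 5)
Your overall strategy (Laplace representation of the resolvent, H\"older, uniform boundedness, then a Neumann series to cross the imaginary axis) is indeed the skeleton of the paper's proof, but two of your intermediate steps do not hold. The first gap is the claimed boundedness of the semigroup and of the scalar orbits: continuity plus membership in $L^p(0,\infty)$ does not make a function $t \mapsto \langle x', e^{tA}x\rangle$ bounded (tall, narrow spikes are compatible with any $L^p$ bound), and the hypotheses of the theorem do not force $\omega_0(A) \le 0$ --- as the paper remarks right after the statement, \cite[Example~4.1]{Neerven1995} exhibits a semigroup satisfying (\ref{form_finite_integral_p_norm}) with $p=1$ for all $x,x'$ whose growth bound is not negative, so in general the semigroup is unbounded. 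This is not cosmetic, because your reduction to integer exponents rests on $L^p \cap L^\infty \subset L^k$ for $k \ge p$; without the $L^\infty$ information one has $L^p(0,\infty) \not\subset L^k(0,\infty)$ for $k > p$, so the sets $B_k$ need not cover the normalized pairs and the Baire category argument has nothing to stand on. (The representation $R(z,A) = \int_0^\infty e^{-zt}e^{tA}\,dt$ for $\re z > 0$ also needs a proof in this generality; the paper devotes its step (a) to it, via the improper-integral argument of Greiner--Voigt--Wolff and a closed-graph/bilinear-map continuity argument. The paper only normalizes so that $\omega_0(A) \le \tfrac12$ by rescaling, never assuming boundedness.)

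The paper avoids the partition and the Baire argument entirely with a single weight function, and this is the idea your proposal is missing. For each pair $(x,x')$ with its own exponent $p$ and conjugate $q$, H\"older gives $|\langle x', R(\lambda,A)x\rangle| \le M_q(\re\lambda)\,C_p(x',x)$ with $M_q(\re\lambda) \le (\re\lambda)^{-1/q}$, and the key observation is that $\re\lambda \cdot |\log \re\lambda| \cdot (\re\lambda)^{-1/q} \to 0$ as $\re\lambda \downarrow 0$ for \emph{every} $q \in (1,\infty]$ simultaneously. Hence $\langle x', \re\lambda\,\log(\re\lambda)\,R(\lambda,A)x\rangle$ is bounded on $\{0 < \re\lambda < 1\}$ for each fixed pair, and a single application of Proposition~\ref{prop_unif_boundedness_on_norming_dual_sets} and the Uniform Boundedness Principle yields $\|R(\lambda,A)\| \le M/(\re\lambda\,|\log\re\lambda|)$ with $M$ independent of $p$. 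From there your concluding step is essentially correct: this bound eventually beats $1/\operatorname{dist}(\lambda,\sigma(A))$, and the Neumann series extends the resolvent, uniformly in $\im\lambda$, across a strip around the imaginary axis (note that no compactness argument is available on the unbounded axis --- the extension must be, and is, quantitative). To salvage your route you would have to first prove boundedness of the orbits or otherwise produce a genuine countable closed covering of the pairs; the logarithmic weight is exactly the device that makes this unnecessary.
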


If $X$ is a Hilbert space, then the conclusion of the above theorem implies that the growth bound of the semigroup satisfies $\omega_0(A) < 0$; this follows from the Gearhart-Pr\"uss Theorem \cite[Theorem~V.1.11]{Engel2000}. However, note that the conditions of Theorem~\ref{thm_finite_weak_p_integral_for_semigroup_yields_spectral_estimate} cannot be sufficient to ensure that $\omega_0(A) < 0$ on an arbitrary Banach space $X$, even if estimate~(\ref{form_finite_integral_p_norm}) holds true for fixed $p=1$ and for all $x\in X$, $x' \in X'$; for a counterexample we refer to \cite[Example~4.1]{Neerven1995}. \par 
We should point out that, at least in the case $E = X'$, Theorem~\ref{thm_finite_weak_p_integral_for_semigroup_yields_spectral_estimate} can be obtained as a corollary from a recent result of Storozhuk on the size of weak semigroup orbits \cite[Theorem~1]{Storozhuk2010}. However, since the proof of Storozhuk's result is technically rather involved, we prefer to give a more direct argument here. Our proof is a modification of the arguments given in \cite{Weiss1988}.

\begin{proof}[Proof of Theorem~\ref{thm_finite_weak_p_integral_for_semigroup_yields_spectral_estimate}]
	We may assume throughout the proof that the growth bound of the semigroup satisfies $\omega_0(A) \le \frac{1}{2}$ (otherwise we can replace $A$ by $\varepsilon A$ for some small $\varepsilon > 0$). \par 
	(a) By H\"older's inequality and assumption (\ref{form_finite_integral_p_norm}) we have
	\begin{align*}
		\int_0^\infty |\langle x', e^{-\lambda t}e^{tA}x \rangle| \, dt < \infty 
	\end{align*}
	for all $x \in X$, $x' \in E$ and for each $\lambda \in \bbC$ satisfying $\re \lambda > 0$. We can now use a similar argument as in the proof of \cite[Proposition~1.1]{Greiner1981} to show that the limit
	\begin{align}
		R(\lambda) := \lim_{\tau \to \infty} \int_0^\tau e^{-\lambda t}e^{tA} \, dt \label{form_integral_converges_to_resolvent}
	\end{align}
	exists with respect to the operator norm topology on $\calL(X)$ whenever $\re \lambda > 0$  (here, the integral $\int_0^\tau e^{-\lambda t}e^{tA} \, dt$ is understood in the strong sense, of course). For the sake of completeness, we include the argument here: Let $\re \lambda > 0$ and choose any $\mu \in \bbC$ such that $\re \mu \in (0,\re \lambda)$. Consider the bilinear map
	\begin{align*}
		b_\mu: X \times E \to L^1([0,\infty);\bbC) \text{,} \quad (x,x') \mapsto (t \mapsto \langle x', e^{-\mu t}e^{tA}x \rangle) \text{.}
	\end{align*}
	Then for each $x_0\in X$, $x'_0 \in E$, both partial mappings $b_\mu(\cdot, x'_0)$ and $b_\mu(x_0, \cdot)$ have closed graph and are hence continuous. Thus, the bilinear map $b_\mu$ is continuous (see \cite[Section III.5.1, Corollary 1]{Schaefer1999}), i.e.~there is a constant $M_\mu \ge 0$ such that
	\begin{align*}
		\int_{0}^\infty |\langle x', e^{-t\mu}e^{tA}x \rangle | \, dt \le M_\mu ||x|| \, ||x'||
	\end{align*}
	for all $x \in X$, $x' \in E$. For $0 \le \tau_1 \le \tau_2$ we thus obtain
	\begin{align*}
		\int_{\tau_1}^{\tau_2} |\langle x', e^{-t\lambda}e^{tA} x \rangle | \, dt & = \int_{\tau_1}^{\tau_2} e^{-t(\re \lambda - \re \mu)}|\langle x', e^{-t\mu}e^{tA} x \rangle | \, dt \le \\
		& \le e^{-\tau_1(\re \lambda - \re \mu)} M_\mu ||x|| \, ||x'||
	\end{align*}
	for all $x \in X$, $x' \in E$. Since $E$ is almost norming, we can find a constant $C \ge 0$ such that
	\begin{align*}
		||\int_{\tau_1}^{\tau_2} e^{-t\lambda}e^{tA} \, dt || \le e^{-\tau_1(\re \lambda - \re \mu)} M_\mu C
	\end{align*}
	whenever $0 \le \tau_1 \le \tau_2$. Hence, the net $(\int_0^\tau e^{-\lambda t}e^{tA} \, dt)_{\tau \ge 0}$ is a Cauchy net in $\calL(X)$ and thus converges. \par
	From the convergence with respect to the norm topology in (\ref{form_integral_converges_to_resolvent}) we can conclude that each $\lambda \in \bbC$ which satisfies $\re \lambda > 0$  is contained in the resolvent set of $A$ and that we have $R(\lambda,A) = R(\lambda)$ for each such $\lambda$ (see \cite[Theorem~II.1.10~(i)]{Engel2000}). \par 
	(b) Let $\lambda \in \bbC$, $\re \lambda \in (0,1)$ and let $x\in X$, $x' \in E$. Choose $p \in [1,\infty)$ such that the estimate~(\ref{form_finite_integral_p_norm}) is satisfied and let $q \in (1,\infty]$ be the conjugate index of $p$, i.e.~let $\frac{1}{p} + \frac{1}{q} = 1$. Let $C_p(x',x) := (\int_0^\infty |\langle x', e^{tA}x \rangle|^p \, dt)^{1/p}$, and let $M_q(\re \lambda)$ be the $L^q$-norm of the mapping $[0,\infty) \to [0,\infty)$, $t \mapsto e^{-t\re \lambda}$. H\"older's inequality together with (a) then yields
	\begin{align*}
		|\langle x', R(\lambda,A) x \rangle| \le M_q(\re \lambda) C_p(x',x) \text{.}
	\end{align*}
	A short computation shows that
	\begin{align*}
		M_q(\re \lambda) =
		\begin{cases}
			(\re \lambda \, q)^{-\frac{1}{q}} \le (\re \lambda)^{-\frac{1}{q}} \quad & \text{if } q \in (1,\infty) \\
			1 \quad & \text{if } q = \infty \text{.}
		\end{cases}
	\end{align*}
	In any case, we have that $\re\lambda \cdot |\log(\re \lambda)|\cdot  M_q(\re \lambda) \to 0$ as $\re \lambda \downarrow 0$. Hence, $|\langle x', \re \lambda \cdot \log(\re \lambda) R(\lambda,A)x \rangle |$ is bounded for $\re\lambda \in (0,1]$. Proposition~\ref{prop_unif_boundedness_on_norming_dual_sets} and the Uniform Boundedness Principle hence yield a constant $M > 0$ such that
	\begin{align}
		||R(\lambda, A)|| \le \frac{M}{\re \lambda \cdot |\log(\re \lambda)|} \label{form_resolvent_estimate}
	\end{align}
	for all $\lambda \in \bbC$ with $\re \lambda \in (0,1)$. \par 
	(c) For some sufficiently small $r \in (0,1)$ we have $4M < |\log\frac{r}{4}|$ and thus $r < \frac{r}{4M}|\log \frac{r}{4}|$. Now, let $\mu \in \bbC$ such that $\re \mu \in [-\frac{r}{4}, \frac{r}{4}]$ and define $\lambda := \frac{r}{4} + i\im \mu$. By (\ref{form_resolvent_estimate}) we then have 
	\begin{align*}
		|\lambda - \mu| \le \frac{r}{2} < \frac{r}{4M} \big|\log \frac{r}{4}\big| - \frac{r}{2} \le \frac{1}{||R(\lambda,A)||} - \frac{r}{2} \text{.}
	\end{align*}
	Since $|\lambda - \mu| < \frac{1}{||R(\lambda,A)||}$, the number $\mu$ is contained in the resolvent set of $A$ and we have $R(\mu,A) = R(\lambda,A)\sum_{n=0}^\infty (\lambda - \mu)^n R(\lambda,A)^n$. We thus obtain the estimate
	\begin{align*}
		||R(\mu,A)|| \le \frac{||R(\lambda,A)||}{1 - |\lambda-\mu|\cdot ||R(\lambda,A)||} = \frac{1}{\frac{1}{||R(\lambda,A||} - |\lambda-\mu|} \le \frac{2}{r} \text{.}
	\end{align*}
	Therefore, the strip $\{z \in \bbC: \re z \in [-\frac{r}{4}, \frac{r}{4}]\}$ is contained in the resolvent set of $A$ and the resolvent $R(\cdot,A)$ is uniformly bounded on this strip. Moreover, it follows from estimate~(\ref{form_resolvent_estimate}) that $R(\lambda,A)$ is uniformly bounded for $\re \lambda \in [\frac{r}{4},\frac{2}{3}]$, and since $\omega_0(A) \le \frac{1}{2}$, $R(\lambda,A)$ is also uniformly bounded for $\re \lambda \ge \frac{2}{3}$. Hence, we conclude that $s_0(A) \le -\frac{r}{4} < 0$.
\end{proof}

\paragraph*{Acknowledgements} I wish to thank Yuri Tomilov for pointing out references \cite{Storozhuk2010} and \cite{Muller2013} to me.

\end{document}